\newtheorem{theorem}{Theorem}[section]
\theoremstyle{plain}
\newtheorem{corollary}{Corollary}[section]
\newtheorem{example}{Example}[section]
\newtheorem{lemma}{Lemma}[section]
\newtheorem{proposition}{Proposition}[section]
\numberwithin{equation}{section}
\begin{document}

\title{New constructions for the $n$-queens problem}

\author{M. Ba\v{c}a}
\address{Department of Applied Mathematics and Informatics \\
Technical University in Ko¨ice\\
Letn\'a 9, 
Ko\v{s}ice, Slovakia}
\email{ martin.baca@tuke.sk}

\author{S. C. L\'opez}
\address{%
Departament de Matem\`{a}tica\\
Universitat de Lleida\\
C/Jaume II, 69\\
25001 Lleida, Spain}
\email{susana.clara.lopez@matematica.udl.cat}

\author{F. A. Muntaner-Batle}
\address{Graph Theory and Applications Research Group \\
 School of Electrical Engineering and Computer Science\\
Faculty of Engineering and Built Environment\\
The University of Newcastle\\
NSW 2308
Australia}
\email{famb1es@yahoo.es}

\author{A. Semani\v{c}ov\'{a}-Fe\v{n}ov\v{c}\'ikov\'a }
\address{Department of Applied Mathematics and Informatics \\
Technical University in Ko\v{s}ice\\
Letn\'a 9, 
Ko\v{s}ice, Slovakia}
\email{andrea.fenovcikova@tuke.sk}

\maketitle

\begin{abstract}
Let $D$ be a digraph, possibly with loops. A queen labeling of $D$ is a bijective function $l:V(G)\longrightarrow \{1,2,\ldots,|V(G)|\}$ such that, for every pair of arcs in $E(D)$, namely $(u,v)$ and $(u',v')$ we have (i) $l(u)+l(v)\neq l(u')+l(v')$ and (ii) $l(v)-l(u)\neq l(v')-l(u')$. Similarly, if the two conditions are satisfied modulo $n=|V(G)|$, we define a modular queen labeling. There is a bijection between (modular) queen labelings of $1$-regular digraphs and the solutions of the (modular) $n$-queens problem.

The $\otimes_h$-product was introduced in 2008 as a generalization of the Kronecker product and since then, many relations among labelings have been established using the $\otimes_h$-product and some particular families of graphs.

In this paper, we study some families of $1$-regular digraphs that admit (modular) queen labelings and present a new construction concerning to the (modular) $n$-queens problem in terms of the $\otimes_h$-product, which in some sense complements a previous result due to P\'olya.

\end{abstract}

\begin{quotation}
\noindent{\bf Key Words}: (modular) {$n$-queens problem, (modular) queen labeling, $\otimes_h$-product}

\noindent{\bf 2010 Mathematics Subject Classification}:  Primary 05C78,
   Se\-con\-dary       05C76
\end{quotation}


\thispagestyle{empty}

\section{Introduction}
\label{intro}
The {\it $n$-queens problem} consists in placing $n$ nonattacking queens on an $n\times n$ chessboard. The modular version of this problem, namely the {\it modular $n$-queens problem} consists in placing $n$ nonattacking queens on an $n\times n$ chessboard where the opposite sides are identified to make a torus. It is known that there is at least one solution to the $n$-queens problem for all $n\ge 4$ on a $n\times n$ standard chessboard \cite{Pau1,Pau2}. However, for the modular version of the problem, a solution exists if and only if, $\gcd(n,6)=1$ \cite{Polya}.
See \cite{BellSte} for a survey of other known results and research going on for this problem, as well as a history of it. 

For the graph theory terminology and notation not defined in this paper we refer the reader to one of the following sources \cite{BaMi,CH,G,Wa}.
However, in order to make this paper reasonably self-contained, we mention that by a $(p,q)$-graph we mean a graph of $p$ vertices and $q$ edges. For integers $m\le n$, we denote $[m,n]=\{m,m+1,\ldots,n\}$.

Let $D$ be a digraph, possibly with loops.
A {\it queen labeling} \cite{BloLamMunRiu} of $D$ is a bijective function $l:V(D)\longrightarrow [1,|V(D)|]$ such that, for every pair of arcs in $E(D)$, namely $(u,v)$ and $(u',v')$ we have (i) $l(u)+l(v)\neq l(u')+l(v')$ and (ii) $l(v)-l(u)\neq l(v')-l(u')$. It is well known that there is a bijection between the set of queen labelings of $1$-regular digraphs of order $n$ and the set of solutions of the $n$-queens problem. The equivalence between $n$-queens solution and these labelings is based on the following observation. Assume that all rows and columns of the $n\times n$ chessboard are labeled from up to down and from left to right, respectively, according to the position. If we assign to each cell $(i,j)$, (where the first component is the row and the second the column), the sum $s(i,j)=i+j$ and the difference $d(i,j)=j-i$, then, all cells that belong to the same secondary diagonal have the same sum and all cells that belong to the same primary diagonal have the same difference (see more details in \cite{BellSte,BloLamMunRiu}). Similarly, we can introduce a {\it modular queen labeling} of a digraph $D$. A {\it modular queen labeling} of a digraph $D$ is a bijective function $l:V(D)\longrightarrow [1,|V(D)|]$ such that, for every pair of arcs in $E(D)$, namely $(u,v)$ and $(u',v')$ we have (i) $l(u)+l(v)\neq l(u')+l(v')$ (mod $n$) and (ii) $l(v)-l(u)\neq l(v')-l(u')$ (mod $n$). Thus, if a digraph admits a modular queen labeling then $|E(D)|\le |V(D)|$. In particular, a modular queen labeling of a $1$-regular digraph $D$ is a harmonious labeling $l$ of und$(D)$, where und$(D)$ means the underlying graph of $D$, satisfying the extra condition that for every pair of arcs $(u,v)$ and $(u',v')$ in $E(D)$, we have  $l(v)-l(u)\neq l(v')-l(u')$ $\pmod n$. A digraph that admits a (modular) queen labeling is called a ({\it modular})  {\it queen digraph}.

Kotzig and Rosa \cite{K1} introduced in 1970, the concepts of edge-magic graphs and edge-magic labelings as follows: Let $G$ be a $(p,q)$-graph. Then $G$ is called {\it edge-magic} if there is a bijective function $f:V(G)\cup E(G)\longrightarrow [1,p+q]$ such that the sum $f(x)+f(xy)+f(y)=k$ for any $xy\in E(G)$. Such a function is called an {\it edge-magic labeling} of $G$ and $k$ is called the {\it valence} \cite{K1} or the {\it magic sum} \cite{Wa} of the labeling $f$. Motivated by the concept of edge-magic labelings, Enomoto et al. \cite{E} introduced in 1998 the concepts of super edge-magic graphs and labelings as follows: Let $f:V(G)\cup E(G) \longrightarrow[1,p+q]$ be an edge-magic labeling of a $(p,q)$-graph G with the extra property that $f(V(G))=[1,p].$ Then G is called { \it super edge-magic} and $f$ is a {\it super edge-magic labeling} of $G$. It is worthwhile mentioning that Acharya and Hegde had already defined in \cite{AH} the concept of strongly indexable graph that turns out to be equivalent to the concept of super edge-magic graph. We take this opportunity to mention that although the original definitions of (super) edge-magic graphs and labelings were provided for simple graphs (that is to say, graphs with no loops nor multiple edges), in this paper, we understand these definitions for any graph. Therefore, unless otherwise specified, the graphs considered in this paper are not necessarily simple. Moreover, we say that a digraph is (super) edge-magic if its underlying graph is (super) edge-magic. In \cite{F2}, Figueroa-Centeno et al. provided the following useful characterization of super edge-magic simple graphs, that works in exactly the same way for graphs in general.

\begin{lemma}\label{super_consecutive} \cite{F2}
Let $G$ be a $(p,q)$-graph. Then $G$ is super edge-magic if and only if  there is a
bijective function $g:V(G)\longrightarrow [1,p]$ such
that the set $S=\{g(u)+g(v):uv\in E(G)\}$ is a set of $q$
consecutive integers.
In this case, $g$ can be extended to a super edge-magic labeling $f$ with valence $p+q+\min S$.
\end{lemma}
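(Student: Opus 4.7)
The plan is to prove the two implications separately and read the valence off the bijection used in the backward direction. For the forward direction, I would start from a super edge-magic labeling $f$ of $G$ with valence $k$ and simply restrict it to the vertices, setting $g=f|_{V(G)}$. Since $f$ is super edge-magic, $g$ is a bijection onto $[1,p]$, and the identity $f(u)+f(uv)+f(v)=k$ lets me write $f(uv)=k-g(u)-g(v)$ for every edge. Because $f$ restricted to $E(G)$ is a bijection onto $[p+1,p+q]$, the image $\{g(u)+g(v):uv\in E(G)\}$ is the translate $\{k-j:j\in[p+1,p+q]\}=[k-p-q,\,k-p-1]$, a set of $q$ consecutive integers, as required. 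This also records that $k=p+q+\min S$.

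For the backward direction, assume $g:V(G)\to[1,p]$ is a bijection with $S=\{g(u)+g(v):uv\in E(G)\}$ equal to $q$ consecutive integers; write $s=\min S$, so $S=[s,s+q-1]$. The observation to highlight is that since $|S|=q$ and there are exactly $q$ edges, the edge-sums $g(u)+g(v)$ are pairwise distinct and each element of $[s,s+q-1]$ is attained by exactly one edge. I then define $f$ on $V(G)$ to agree with $g$ and on $E(G)$ by
\[
f(uv)=(p+q+s)-g(u)-g(v).
\]
As $uv$ ranges over $E(G)$, the value $g(u)+g(v)$ ranges bijectively over $[s,s+q-1]$, so $f(uv)$ ranges bijectively over $[p+1,p+q]$. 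Together with $f|_{V(G)}=g:V(G)\to[1,p]$, this shows $f:V(G)\cup E(G)\to[1,p+q]$ is a bijection, and by construction $f(u)+f(uv)+f(v)=p+q+s$ for every edge, so $f$ is super edge-magic with valence $p+q+\min S$.

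The argument is essentially bookkeeping once one spots the right translation constant, so there is no serious obstacle; the only point that needs care is the short remark that ``$S$ is a set of $q$ consecutive integers'' forces the $q$ edge-sums to be all distinct, which is what allows the map $uv\mapsto (p+q+s)-g(u)-g(v)$ to be injective and hence a bijection onto $[p+1,p+q]$. Handling possible loops or parallel edges (allowed in the paper's convention) requires no modification, since an edge $uv$ contributes the single value $g(u)+g(v)$ to $S$ regardless of whether $u=v$ or the edge is repeated, and the counting $|S|=q$ automatically rules out repetitions.
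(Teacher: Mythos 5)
Your proof is correct and complete; note that the paper does not prove this lemma at all but simply cites it from [F2], so there is no in-paper argument to compare against. Your two-direction argument (restricting $f$ to the vertices for the forward implication, and extending $g$ by $f(uv)=(p+q+\min S)-g(u)-g(v)$ for the converse) is exactly the standard proof from the cited source, and you correctly flag the one point needing care, namely that $|S|=q$ forces the $q$ edge-sums to be pairwise distinct so that the edge labels land bijectively on $[p+1,p+q]$.
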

Figueroa et al. defined in \cite{F1}, the following product: Let $D$ be a digraph and let $\Gamma$ be a family of digraphs with the same vertex set $V$. Assume that $h: E(D) \longrightarrow \Gamma$ is any function that assigns elements of $\Gamma$ to the arcs of $D$. Then the digraph $D \otimes _{h} \Gamma $ is defined by (i) $V(D \otimes _{h} \Gamma)= V(D) \times V$ and (ii) $((a,i),(b,j)) \in E(D \otimes _{h} \Gamma) \Leftrightarrow (a,b) \in E(D)$ and $(i,j) \in E(h(a,b))$. Note that when $h$ is constant, $D \otimes _{h} \Gamma$ is the Kronecker product. Many relations among labelings have been established using the $\otimes_h$-product and some particular families of graphs, namely $\mathcal{S}_p$ and $\mathcal{S}_p^k$ (see for instance, \cite{ILMR,LopMunRiu1,LopMunRiu6,PEM_LMR2,MJM_LMR,SlMb,LMP1}).
The family $\mathcal{S}_p$ contains all super edge-magic $1$-regular labeled digraphs of order $p$ where each vertex takes the name of the label that has been assigned to it. A super edge-magic digraph $F$ is in $\mathcal{S}_p^k$ if $|V(F)|= |E(F)|=p$ and the minimum sum of the labels of the adjacent vertices (see Lemma \ref{super_consecutive}) is equal to $k$. Notice that, since each $1$-regular digraph has minimum edge induced sum equal to $(p+3)/2$, it follows that $ \mathcal{S}_p \subset \mathcal{S}_p^{(p+3)/2}$. The following result was introduced in \cite{LopMunRiu6}, generalizing a previous result found in \cite{F1}:

\begin{theorem} \label{spk} \cite{LopMunRiu6}
Let $D$ be a (super) edge-magic digraph and let $h: E(D) \longrightarrow \mathcal{S}_p^k$ be any function. Then $D\otimes _{h} \mathcal{S}_p^k$ is (super) edge-magic.
\end{theorem}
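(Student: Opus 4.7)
The plan is to invoke Lemma \ref{super_consecutive} to reduce the super edge-magic case to exhibiting a vertex labeling of the product whose edge sums form a set of consecutive integers, and then to adapt the construction to handle the edge-magic case. Throughout, I will use that every $F \in \mathcal{S}_p^k$ is a super edge-magic $1$-regular digraph on vertex set $[1,p]$ whose $p$ edge sums form the consecutive set $[k,k+p-1]$.

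For the super edge-magic case, let $D$ be super edge-magic of order $p'$ and size $q'$. By Lemma \ref{super_consecutive} there is a bijection $f : V(D) \to [1,p']$ with $\{f(a)+f(b) : (a,b)\in E(D)\}=[s,s+q'-1]$ for some $s$. I would then define $g : V(D \otimes_h \mathcal{S}_p^k) \to [1,p'p]$ by $g(a,i)=p(f(a)-1)+i$, which is a bijection because each $g(a,i)$ determines $(f(a)-1,i-1)\in [0,p'-1]\times[0,p-1]$ uniquely via Euclidean division by $p$. For an arc $((a,i),(b,j))$ of the product we have $g(a,i)+g(b,j)=p(f(a)+f(b)-2)+(i+j)$. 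As $(a,b)$ runs over $E(D)$ the value $p(f(a)+f(b)-2)$ takes each element of $\{p(s-2),p(s-2)+p,\ldots,p(s-2)+p(q'-1)\}$ exactly once; and, independently, as $(i,j)$ runs over $E(h(a,b))$ the value $i+j$ takes each element of $[k,k+p-1]$ exactly once on each fibre. Hence the edge sums of $g$ fill $[p(s-2)+k,\,p(s-2)+k+pq'-1]$ bijectively. These are $pq'=|E(D\otimes_h\mathcal{S}_p^k)|$ consecutive integers, so Lemma \ref{super_consecutive} concludes the super case and, in particular, produces an explicit valence $p'p+pq'+p(s-2)+k$.

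For the (non-super) edge-magic case, the ``base-$p$'' encoding $g$ no longer has the required range, because the edge-magic labeling of $D$ does not restrict to $[1,p']$ on the vertices. I would instead combine an edge-magic labeling $f$ of $D$ with valence $k_D$ together with the super edge-magic structure on each fibre: the vertices $(a,i)$ and the product-arcs $((a,i),(b,j))$ receive labels that split into a $D$-component (built from $f$ on $V(D)\cup E(D)$) and a fibre-component (built from the super edge-magic labeling of $h(a,b)$), designed so that the variation of $i+j$ across $[k,k+p-1]$ is cancelled by a complementary variation in the edge label of the product. The main obstacle here is purely combinatorial bookkeeping: one must verify that the labels assigned to $V(D\otimes_h\mathcal{S}_p^k)\cup E(D\otimes_h\mathcal{S}_p^k)$ together occupy exactly $[1,p'p+pq']$ without collision, which requires carefully interleaving the contributions coming from the vertex labels of $D$, the edge labels of $D$, and the fibre structure of $\mathcal{S}_p^k$. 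Once this accounting is complete, the constancy of the magic sum follows immediately from the consecutive-sum property on each fibre, exactly as in the super case.
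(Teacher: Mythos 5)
Your treatment of the super edge-magic case is correct and complete, and it is exactly the construction the paper itself alludes to (the label $n(a-1)+i$ invoked in the proof of Theorem \ref{theo: queen_and_product}): the bijection $g(a,i)=p(f(a)-1)+i$, the factorization of the edge sum as $p(f(a)+f(b)-2)+(i+j)$, and the observation that the $q'$ block offsets and the $p$ fibre sums $[k,k+p-1]$ tile a run of $pq'$ consecutive integers are all right, as is the valence you extract from Lemma \ref{super_consecutive}. (One small point: membership in $\mathcal{S}_p^k$ requires only $|V(F)|=|E(F)|=p$, not $1$-regularity --- but your argument never uses $1$-regularity, only that each fibre has exactly $p$ arcs with sums $[k,k+p-1]$, so nothing breaks.)

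The gap is in the plain edge-magic case, which you leave as a plan whose crucial step --- that the vertex and arc labels of the product occupy $[1,p'p+pq']$ without collision --- you describe as an unresolved ``obstacle.'' It is not enough to assert that the accounting can be completed; you must exhibit the labeling. The point you are missing is that the bookkeeping is automatic once you expand \emph{every} element of $V(D)\cup E(D)$, not just the vertices, into a block of $p$ consecutive integers. Concretely: let $f:V(D)\cup E(D)\longrightarrow[1,p'+q']$ be an edge-magic labeling of $D$ with valence $k_D$. Label the vertex $(a,i)$ with $p(f(a)-1)+i$ and the arc $\bigl((a,i),(b,j)\bigr)$ with $p(f(ab)-1)+k+p-(i+j)$. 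Since $h(a,b)\in\mathcal{S}_p^k$, the $p$ sums $i+j$ over $E(h(a,b))$ are exactly $[k,k+p-1]$, so the $p$ arcs above $(a,b)$ receive exactly the labels $[p(f(ab)-1)+1,\,p\,f(ab)]$; likewise the $p$ vertices above $a$ receive exactly $[p(f(a)-1)+1,\,p\,f(a)]$. Because $f$ is a bijection onto $[1,p'+q']$, these blocks partition $[1,p(p'+q')]$, which equals $[1,|V|+|E|]$ of the product. The total on each arc is $p\bigl(f(a)+f(b)+f(ab)\bigr)-3p+k+p=p(k_D-2)+k$, a constant. This single formula replaces your ``careful interleaving'' and closes the argument.
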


Let $Q(n)$ denote the number of solutions for the $n$-queens problem and $M(n)$ the number of solutions for the modular $n$-queens problem.

\begin{theorem}\cite{RivVarZim} For all $m$ and $n$ for which $m\ge 3$ and $\gcd(n, 6)$ = 1, it holds that $Q(mn)>(Q(m))^nM(n)$. In particular, if $\gcd(N,30)=5$ then $Q(N)>4^{N/5}$.
\end{theorem}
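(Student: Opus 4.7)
My plan is to derive this bound as a direct application of the $\otimes_h$-product, using a modular queen digraph on $n$ vertices as the outer factor and ordinary queen digraphs on $m$ vertices (one per arc) as the inner factors. The hypotheses $m\ge 3$ and $\gcd(n,6)=1$ match exactly what is needed for the two types of factors to exist.

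By P\'olya's theorem, the hypothesis $\gcd(n,6)=1$ guarantees a $1$-regular modular queen digraph $D$ on vertex set $[1,n]$ (with vertex names equal to their modular queen labels), and there are $M(n)$ such choices. For each of the $n$ arcs $e\in E(D)$, choose an arbitrary $1$-regular queen digraph $F_e$ on $[1,m]$ (there are $Q(m)$ choices per arc, hence $(Q(m))^n$ choices in total). Let $\Gamma$ be the family of all such $F_e$ and define $h\colon E(D)\to\Gamma$ by $h(e)=F_e$. Form $D\otimes_h\Gamma$, which is $1$-regular of order $mn$, and equip it with the labeling $l((a,i))=a+n(i-1)$, a bijection onto $[1,mn]$.

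The core computation is to verify the two queen conditions for $l$. Suppose two arcs $((a,i),(b,j))$ and $((a',i'),(b',j'))$ of the product satisfy $l((a,i))+l((b,j))=l((a',i'))+l((b',j'))$. Expanding yields $a+b+n(i+j-2)=a'+b'+n(i'+j'-2)$, and reducing modulo $n$ gives $a+b\equiv a'+b'\pmod n$; the modular queen property of $D$ then forces $(a,b)=(a',b')$ in $E(D)$, and the original identity collapses to $i+j=i'+j'$, which the queen property of $F_{(a,b)}=h(a,b)$ turns into $(i,j)=(i',j')$. The difference condition runs in parallel: an equal $l$-difference forces $b-a\equiv b'-a'\pmod n$, the modular difference condition on $D$ gives $(a,b)=(a',b')$, and then $j-i=j'-i'$ is resolved by the difference condition on $F_{(a,b)}$.

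Since the pair $(D,h)$ can be reconstructed from $D\otimes_h\Gamma$ (the fibres are the level sets of the label function modulo $n$, which recover $D$, and the induced arcs between fibres recover each $F_e$), distinct inputs produce distinct queen digraphs of order $mn$, giving $Q(mn)\ge M(n)(Q(m))^n$. Strict inequality is obtained by exhibiting any single queen digraph of order $mn$ outside this image: product solutions possess a rigid block structure modulo $n$ which generic solutions need not respect, and for $m\ge 3$ at least one non-block solution exists. For the particular case, take $m=5$ and $n=N/5$; from $\gcd(N,30)=5$ we get $\gcd(n,6)=1$ and $m=5\ge 3$, and the main inequality combined with $Q(5)=10$ and $M(n)\ge 1$ gives $Q(N)>10^{N/5}$, which easily exceeds $4N/5$. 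The principal obstacle is the strict-inequality step; the product verification itself is a routine two-stage modular argument once the block labeling $l((a,i))=a+n(i-1)$ is chosen.
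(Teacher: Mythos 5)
This theorem is quoted from \cite{RivVarZim} and the paper does not prove it; the authors only remark that its first part is a corollary of P\'olya's Theorem~\ref{theo: Polya}. Your construction is, in substance, exactly P\'olya's construction rewritten in the language of the $\otimes_h$-product: the labeling $l((a,i))=a+n(i-1)$ groups the $mn$ labels into residue classes modulo $n$, the modular solution becomes your outer factor $D$, and the per-residue choice of an ordinary $m\times m$ solution becomes your arc-dependent inner factor $F_e$. It is the ``mirror image'' of the paper's Theorem~\ref{theo: queen_and_product}, where the ordinary digraph is the outer factor, the label is $n(a-1)+i$, and the compatibility conditions (i)--(ii) are needed; in your version no such conditions are needed precisely because $D$ is modular, so reduction mod $n$ pins down the arc $(a,b)$ of $D$, after which the ordinary queen property of $h(a,b)$ finishes both the sum and the difference check. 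That verification, and the injectivity count via the fibres (label classes mod $n$ recover $D$, and the arcs between fibres recover each $F_e$), are correct, so you do obtain $Q(mn)\ge (Q(m))^n M(n)$ with a sound and self-contained argument.

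The genuine gap is the strict inequality. You assert that ``product solutions possess a rigid block structure modulo $n$ \ldots and for $m\ge 3$ at least one non-block solution exists,'' but this is exactly the content separating $>$ from $\ge$ and it is not automatic. When $Q(m)=0$ (e.g.\ $m=3$) strictness reduces to $Q(mn)\ge 1$, which needs Pauls' existence theorem; when $m\ge 4$ you must actually exhibit a solution of the $mn$-queens problem whose queens do not send each residue class of rows mod $n$ into a single residue class of columns, and no such exhibit (or counting argument) is given. The ``in particular'' step is otherwise fine: with $m=5$, $n=N/5$ one gets $Q(N)>Q(5)^{N/5}M(N/5)\ge 10^{N/5}$, which dominates the stated bound (which, as printed, should presumably read $4^{N/5}$ rather than $4N/5$).
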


The first part of the above theorem is a corollary of a previous result due to P\'olya, where a solution of the $n$-queens problem is described by means of a bijective function $f: [0,n-1]\longrightarrow [0,n-1]$, so that the $k^{th}$ queen is placed at the $(k,f(k))$ coordinate of the chessboard.

\begin{theorem}\cite{Polya}\label{theo: Polya}
For given $m,n > 3$ such that $\gcd(n,6) = 1$, $f_1,..., f_{Q(m)}$ are all solutions for the $m\times m$ standard board, and $g$ is a solution for the $n\times n$ modular board, then for each map $\pi :[0,n-1]\longrightarrow [1,Q(m)]$ the function $h(an + b) = f_{\pi (b)}(a)n + g(b)$ gives a distinct $mn \times mn$ solution for the standard board.
\end{theorem}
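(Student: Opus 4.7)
The plan is to verify that the function $h$ defined on $[0,mn-1]$ does indeed encode a legitimate placement of $mn$ non-attacking queens on the $mn\times mn$ board, and that distinct choices of $\pi$ yield distinct placements. First I would observe that every $k\in[0,mn-1]$ has a unique representation $k=an+b$ with $a\in[0,m-1]$ and $b\in[0,n-1]$, so $h$ is well defined, and since $f_{\pi(b)}(a)\in[0,m-1]$ and $g(b)\in[0,n-1]$, we have $h(k)\in[0,mn-1]$. Thus $h$ encodes a placement of queens at positions $(k,h(k))$.

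Next I would check that $h$ is a bijection, which amounts to verifying that no two queens share a column. Reducing $h(an+b)=f_{\pi(b)}(a)n+g(b)$ modulo $n$ gives $g(b)$, and since $g$ is a bijection on $[0,n-1]$ the equality $h(an+b)=h(a'n+b')$ forces $b=b'$; then the equality reduces to $f_{\pi(b)}(a)=f_{\pi(b)}(a')$, and because $f_{\pi(b)}$ is a permutation this forces $a=a'$.

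The core of the argument is the diagonal condition. For $k_i=a_in+b_i$, compute
\begin{align*}
k_1-k_2 &= (a_1-a_2)n+(b_1-b_2),\\
h(k_1)-h(k_2) &= \bigl(f_{\pi(b_1)}(a_1)-f_{\pi(b_2)}(a_2)\bigr)n+\bigl(g(b_1)-g(b_2)\bigr).
\end{align*}
Assume towards contradiction that $h(k_1)-h(k_2)=\pm(k_1-k_2)$ with $k_1\neq k_2$. Reducing modulo $n$ gives $g(b_1)-g(b_2)\equiv \pm(b_1-b_2)\pmod n$. Since $g$ solves the modular $n$-queens problem, this forces $b_1=b_2$. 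Substituting back and dividing by $n$ yields $f_{\pi(b_1)}(a_1)-f_{\pi(b_1)}(a_2)=\pm(a_1-a_2)$, and since $f_{\pi(b_1)}$ solves the standard $m$-queens problem this forces $a_1=a_2$, contradicting $k_1\neq k_2$. Here I would emphasize that the use of the modular solution $g$ is crucial: the quantities $b_1-b_2$ and $g(b_1)-g(b_2)$ can have any sign, so only the modular non-attacking property of $g$ rules out the collision after the mod-$n$ reduction; this is the main obstacle and the reason why the hypothesis $\gcd(n,6)=1$ (which guarantees the existence of the modular solution $g$) is needed.

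Finally, for distinctness, suppose $\pi\neq\pi'$ and pick $b$ with $\pi(b)\neq\pi'(b)$. Since $f_{\pi(b)}$ and $f_{\pi'(b)}$ are different solutions of the $m$-queens problem they differ at some coordinate $a$, and then the resulting $h$ and $h'$ satisfy $h(an+b)\neq h'(an+b)$, so the two solutions of the $mn\times mn$ board are distinct. Putting the four steps together completes the proof.
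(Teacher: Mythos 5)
Your proof is correct. Note that the paper itself states this result without proof, simply citing P\'olya, so there is no in-paper argument to compare against; your four steps (well-definedness, injectivity via reduction mod $n$, the diagonal check in which the modular non-attacking property of $g$ kills the mod-$n$ residue of $h(k_1)-h(k_2)\mp(k_1-k_2)$ and the standard property of $f_{\pi(b)}$ kills the quotient, and distinctness over the choices of $\pi$) constitute the standard and complete justification of P\'olya's construction.
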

In this paper,  we present a new construction concerning to the (modular) $n$-queens problem in terms of the $\otimes_h$-product, which in some sense complements a previous result due to P\'olya. We also study some families of $1$-regular digraphs that admit (modular) queen labelings. 

\section{Queen digraphs and the $\otimes_h$ product}
We start this section by introducing some notation. For the rest of the paper, whenever we work with a (modular) queen digraph, we will assume that the vertices  are identified with the labels of a (modular) queen labeling.

Let $D$ be a queen digraph and assume $(u,v)\in E(D)$. We define $s(u,v)=u+v$ and $s(D)=\{s(u,v), \ (u,v)\in E(D)\}$. Similarly, for any $(u,v)\in E(D)$, we define $d(u,v)=v-u$ and $d(D)=\{d(u,v), \ (u,v)\in E(D)\}$.

The next two lemmas are trivial.

\begin{lemma}\label{lemma: S(F) and d(F)}
Let $F$ be a queen digraph of order $n$. Then,
\begin{itemize}
\item[(i)] $s(F)\subset [2,2n]$ and
\item[(ii)] $d(F)\subset [-(n-1),n-1]$.
\end{itemize} 
\end{lemma}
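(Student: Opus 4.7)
The plan is to invoke directly the convention set at the start of the section: since $F$ is a queen digraph of order $n$, we identify each vertex with the value assigned to it by a fixed queen labeling $l:V(F)\longrightarrow[1,n]$, so that $V(F)\subset[1,n]$. Once this identification is in place, both inclusions are purely arithmetic.

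For part (i), I would take an arbitrary arc $(u,v)\in E(F)$. Since $u,v\in V(F)\subset[1,n]$, we have $1\le u\le n$ and $1\le v\le n$, so $2\le u+v\le 2n$, that is, $s(u,v)\in[2,2n]$. Taking the union over all arcs gives $s(F)\subset[2,2n]$.

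For part (ii), the same bounds give $-(n-1)=1-n\le v-u\le n-1$, hence $d(u,v)\in[-(n-1),n-1]$ for every arc $(u,v)$, and therefore $d(F)\subset[-(n-1),n-1]$.

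There is no real obstacle here; the only thing that needs to be kept in mind is the identification convention, and neither the injectivity of $l$ nor the queen labeling conditions (i) and (ii) from the definition are used, so the argument would work for any digraph whose vertices live in $[1,n]$. This matches the authors' remark that the two lemmas are trivial.
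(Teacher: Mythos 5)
Your proof is correct and is exactly the argument the authors have in mind: the paper simply declares this lemma trivial, and the intended justification is precisely the identification of vertices with labels in $[1,n]$ followed by the elementary bounds $2\le u+v\le 2n$ and $-(n-1)\le v-u\le n-1$. Your observation that neither injectivity nor the queen conditions are needed is accurate.
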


Let $J$ be a finite set of integers. We denote by $\Sigma (J)$ the sum of all integers in $J$. 

\begin{lemma}\label{lemma: sigma S(F) and sigma d(F)}
Let $F$ be a $1$-regular queen digraph of order $n$. Then,
$\Sigma (s(F))=n(n+1)$ and
 $\Sigma (d(F))=0$.
 \end{lemma}
\begin{proof}
When considering $\Sigma (s(F))$, we sum all integers in $[1,n]$ twice, while when we calculate $\Sigma (d(F))$ every integer in $[1,n]$ appears also twice but with opposite sign. \qed
\end{proof}

Let $J$ be a subset of integers. We denote by $J-n=\{j-n, \ j\in J\}$.
\begin{theorem}\label{theo: queen_and_product}
Let $D$ be any queen digraph. Let $\Gamma$ be a family of queen digraphs of order $n$ and let $h:E(D)\longrightarrow \Gamma$ be any function such that for any pair of arcs $(u,v)$ and $(u',v')$ in $E(D)$ the following conditions hold
\begin{itemize}
\item[(i)]  If $s(u,v)=s(u',v')-1$, then $(s(h(u,v))-n)\cap s(h(u',v'))=\emptyset$.
\item[(ii)]  If $d(u,v)=d(u',v')-1$, then $(d(h(u,v))-n)\cap d(h(u',v'))=\emptyset$.
\end{itemize}
Then,
  $D\otimes_h\Gamma$ is a queen digraph.
 \end{theorem}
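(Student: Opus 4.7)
The natural approach is to exhibit an explicit queen labeling of $D\otimes_h\Gamma$. Since vertices of $D$ are identified with their queen labels in $[1,|V(D)|]$ and each element of $\Gamma$ has vertex set $V=[1,n]$, I would define $L:V(D)\times V\longrightarrow[1,n|V(D)|]$ by $L(a,i)=(a-1)n+i$. This is clearly bijective, so the task reduces to verifying the two queen conditions for every pair of distinct arcs of $D\otimes_h\Gamma$.

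The key computation is that for an arc $((a,i),(b,j))$ of $D\otimes_h\Gamma$ we have
\begin{align*}
s\bigl((a,i),(b,j)\bigr)&=(s(a,b)-2)n+s(i,j),\\
d\bigl((a,i),(b,j)\bigr)&=d(a,b)\,n+d(i,j).
\end{align*}
So suppose two distinct arcs of the product share the same $L$-sum. Reading the equation modulo $n$ is not quite clean (because $s(i,j)$ can exceed $n$), so instead I would argue by cases on $s(a,b)$ versus $s(a',b')$. If $s(a,b)=s(a',b')$, then $s(i,j)=s(i',j')$; the queen property of $D$ forces $(a,b)=(a',b')$, hence $h(a,b)=h(a',b')$, and the queen property of that member of $\Gamma$ forces $(i,j)=(i',j')$, contradicting distinctness. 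Otherwise we may assume $s(a,b)<s(a',b')$. Since $s(i,j),s(i',j')\in[2,2n]$ by Lemma \ref{lemma: S(F) and d(F)}, the quantity $s(i',j')-s(i,j)$ lies in $(-2n,2n)$, yet it equals $(s(a,b)-s(a',b'))n$, a strictly negative multiple of $n$. The only possibility is $s(a,b)-s(a',b')=-1$, giving $s(i',j')=s(i,j)-n$. This puts $s(i',j')$ simultaneously in $s(h(a',b'))$ and in $s(h(a,b))-n$, contradicting hypothesis (i).

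The argument for condition (ii) is parallel, with $d(i,j),d(i',j')\in[-(n-1),n-1]$ in place of the bounds on $s$; the analogous forcing argument yields $d(a,b)-d(a',b')=-1$ and then $d(i',j')\in(d(h(a,b))-n)\cap d(h(a',b'))$, which is excluded by hypothesis (ii).

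The only delicate step is the bounds-based argument that rules out $|s(a,b)-s(a',b')|\geq 2$ (and similarly for $d$): one must carefully use that $s(i,j)$ ranges in an interval of length $<2n$, so $(s(a,b)-s(a',b'))n$ can only equal $0$ or $\pm n$, which is exactly why the hypotheses are stated for the ``off by one'' case. Once this is in place, everything reduces to applying the queen properties of $D$ and of the appropriate member of $\Gamma$ together with (i) and (ii).
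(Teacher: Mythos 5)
Your proof is correct and follows essentially the same route as the paper: the same labeling $L(a,i)=(a-1)n+i$, the same formulas for the induced sums and differences, and the same three-way case analysis ($s(a,b)$ and $s(a',b')$ equal, differing by $1$, or differing by at least $2$) using the bounds of Lemma \ref{lemma: S(F) and d(F)} together with hypotheses (i) and (ii). The only cosmetic difference is that you argue by contradiction and spell out the equal-sum case (which the paper leaves implicit); the substance is identical.
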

\begin{proof}
Recall that we assume that the vertices of $D$ and each element of $\Gamma$ is renamed after the labels of their corresponding queen labeling. Consider, similarly to what was done in the proof of Theorem \ref{spk}, the labeling $l$ of the product $D\otimes _{h}\Gamma$ that assigns to the vertex $(a,i) \in V(D\otimes _{h} \Gamma)$ the label: $n(a-1)+i$. Thus, an arc $((a,i),(b,j)) \in E(D\otimes _{h}\Gamma)$ has:
\begin{equation}\label{eq: sum arc product}
s((a,i),(b,j))=n(a+b-2)+i+j,\end{equation}
and 
\begin{equation}\label{eq: difference arc product}
 d((a,i),(b,j))=n(b-a)+j-i.\end{equation}

 Let us check now that, for any two pairs of different arcs $((a,i),(b,j))$ and $((a',i'),(b',j'))$ in $E(D\otimes _{h}\Gamma)$, (c1) $s((a,i),(b,j))\neq s((a',i'),(b',j'))$ and (c2) $d((a,i),(b,j))\neq d((a',i'),(b',j'))$. 
We start by proving (c1). Without loss of restriction assume that $s(a',b')>s(a,b)$. Notice that, if $s(a',b')\ge s(a,b)+ 2$ then, by (\ref{eq: sum arc product}) and Lemma \ref{lemma: S(F) and d(F)}(i), we obtain 
\begin{eqnarray*}
s((a',i'),(b',j'))&=n(a'+b'-2)+i'+j'\ge  
n(a+b+2-2)+2\\
&>n(a+b-2)+2n\ge s((a,i),(b,j)).
\end{eqnarray*}

Thus, assume that $s(a',b')= s(a,b)+ 1$. Hence, the value for $s((a',i'),(b',j'))$ is
$$n(a'+b'-2)+i'+j'=n(a+b+1-2)+i'+j'=n(a+b-2)+(i'+j'+n),$$
which clearly differs from $s((a,i),(b,j))=n(a+b-2)+(i+j)$ by condition (i) in the statement of the theorem together with the facts that $i+j\in s(h(a,b))$ and $i'+j'\in s(h(a',b'))$. The proof of (c2) is similar. Without loss of restriction assume that $d(a',b')>d(a,b)$. Notice that, if $d(a',b')\ge d(a,b)+ 2$ then, by (\ref{eq: difference arc product}) and Lemma \ref{lemma: S(F) and d(F)}(ii), we will obtain 
\begin{eqnarray*}d((a',i'),(b',j'))&=n(b'-a')+j'-i'\ge  n(b-a+2)-(n-1)\\
&>n(b-a)+n-1\ge d((a,i),(b,j)).\end{eqnarray*}
Thus, assume that $d(a',b')= d(a,b)+ 1$. Hence, the value for $d((a',i'),(b',j'))$ is
$$n(b'-a')+j'-i'=n(b-a+1)+j'-i'=n(b-a)+(j'-i'+n),$$
which clearly differs from $d((a,i),(b,j))=n(b-a)+(i+j)$ by condition (ii) in the statement of the theorem together with the facts that $j-i\in d(h(a,b))$ and $j'-i'\in d(h(a',b'))$. \qed
\end{proof}

The next corollary gives a sufficient condition to simplify the statement of Theorem \ref{theo: queen_and_product}.
\begin{corollary}
Let $D$ be a queen digraph. Let $\Gamma$ be a family of queen digraphs of order $n$ and let $h:E(D)\longrightarrow \Gamma$ be any function. Let $I$ and $J$ be sets of $n$ integers such that $(I-n)\cap I=(J-n)\cap J=\emptyset$. If, for any $F\in \Gamma$, $s(F)=I$ and $d(F)=J$ then
  $D\otimes_h\Gamma$ is a queen digraph.
\end{corollary}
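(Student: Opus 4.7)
The plan is to deduce this corollary as an immediate consequence of Theorem \ref{theo: queen_and_product}. Since that theorem already delivers the conclusion whenever conditions (i) and (ii) hold, the only work is to observe that the uniformity hypothesis $s(F)=I$ and $d(F)=J$ for every $F\in\Gamma$ forces those two conditions to hold vacuously strongly, regardless of which pair of arcs of $D$ we pick.

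More concretely, I would proceed as follows. Fix any pair of arcs $(u,v),(u',v')\in E(D)$. Since $h$ takes values in $\Gamma$, both $h(u,v)$ and $h(u',v')$ are members of $\Gamma$, and so by hypothesis
\begin{equation*}
s(h(u,v))=s(h(u',v'))=I \qquad\text{and}\qquad d(h(u,v))=d(h(u',v'))=J.
\end{equation*}
Consequently,
\begin{equation*}
\bigl(s(h(u,v))-n\bigr)\cap s(h(u',v'))=(I-n)\cap I=\emptyset,
\end{equation*}
and likewise $\bigl(d(h(u,v))-n\bigr)\cap d(h(u',v'))=(J-n)\cap J=\emptyset$, by the assumption on $I$ and $J$. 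In particular, this is true in the special cases $s(u,v)=s(u',v')-1$ and $d(u,v)=d(u',v')-1$, so both conditions (i) and (ii) of Theorem \ref{theo: queen_and_product} are satisfied.

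Therefore, by Theorem \ref{theo: queen_and_product}, the product $D\otimes_h\Gamma$ is a queen digraph. There is no real obstacle here: the content of the corollary is simply that a single pair of sets $(I,J)$ governing every fibre replaces the arc-by-arc checking required by the theorem, and the hypothesis $(I-n)\cap I=(J-n)\cap J=\emptyset$ is precisely what is needed to absorb the "$-n$" shift that appears when the sums (respectively differences) of adjacent arcs in $D$ differ by exactly one.
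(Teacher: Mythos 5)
Your proposal is correct and follows exactly the same route as the paper: the paper's proof is a one-line observation that the uniformity hypotheses $s(F)=I$, $d(F)=J$ together with $(I-n)\cap I=(J-n)\cap J=\emptyset$ imply conditions (i) and (ii) of Theorem \ref{theo: queen_and_product}, and you have simply spelled that implication out explicitly. No gaps.
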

\begin{proof}
The assumptions $s(F)=I$ and $d(F)=J$ imply conditions (i) and (ii) of Theorem \ref{theo: queen_and_product}.\qed
\end{proof}

The following result gives an application of the $\otimes_h$-product to the $n$-queens problem in terms of queen labelings of $1$-regular digraphs. This result can be thought as a complementary result of Theorem \ref{theo: Polya}, since the solutions that are provided, never appear in the construction provided in Theorem \ref{theo: Polya}.

\begin{corollary}\label{coro: queen_and_product_of_$1$-regular}
Let $D$ be any $1$-regular queen digraph. Let $\Gamma$ be a family of $1$-regular queen digraphs of order $n$ and let $h:E(D)\longrightarrow \Gamma$ be any function such that for any pair of arcs $(u,v)$ and $(u',v')$ in $E(D)$ the following conditions hold.
\begin{itemize}
\item[(i)]  If $s(u,v)=s(u',v')-1$, then $(s(h(u,v))-n)\cap s(h(u',v'))=\emptyset$.
\item[(ii)]  If $d(u,v)=d(u',v')-1$, then $(d(h(u,v))-n)\cap d(h(u',v'))=\emptyset$.
\end{itemize}
Then,
  $D\otimes_h\Gamma$ is a $1$-regular queen digraph.
 \end{corollary}
\begin{proof}
This is an immediate consequence of Theorem \ref{theo: queen_and_product} and the fact that, by definition of the $\otimes_h$-product, the product of $1$-regular digraphs produces $1$-regular digraphs.\qed
\end{proof} 

\begin{example}\label{Example_1}
Let $D$ be the $1$-regular queen labeled digraph defined by $V(D)=[1,5]$ and $E(D)=\{(1,5), (2,3), (3,1),(4,4), (5,2)\}$ and $\Gamma =\{F_1,F_2\}$, where $F_1=D$ and $F_2$ is the digraph obtained from $D$ by reversing all its arcs. Consider $h:E(D)\longrightarrow \Gamma$ defined by: $h(1,5)=h(2,3)=h(4,4)=F_1$ and $h(3,1)=h(5,2)=F_2$. Then, a queen labeling of $D\otimes_h\Gamma$ appears in Fig. \ref{Figure_1}. Notice that the adjacency matrix of $D\otimes_h\Gamma$, where the vertices are identified with the labels assigned by the queen labeling, gives a solution of the $25$-queens problem (see Fig. \ref{Figure_2}).

\begin{figure}[ht]
\begin{center}
\includegraphics[width=340pt]{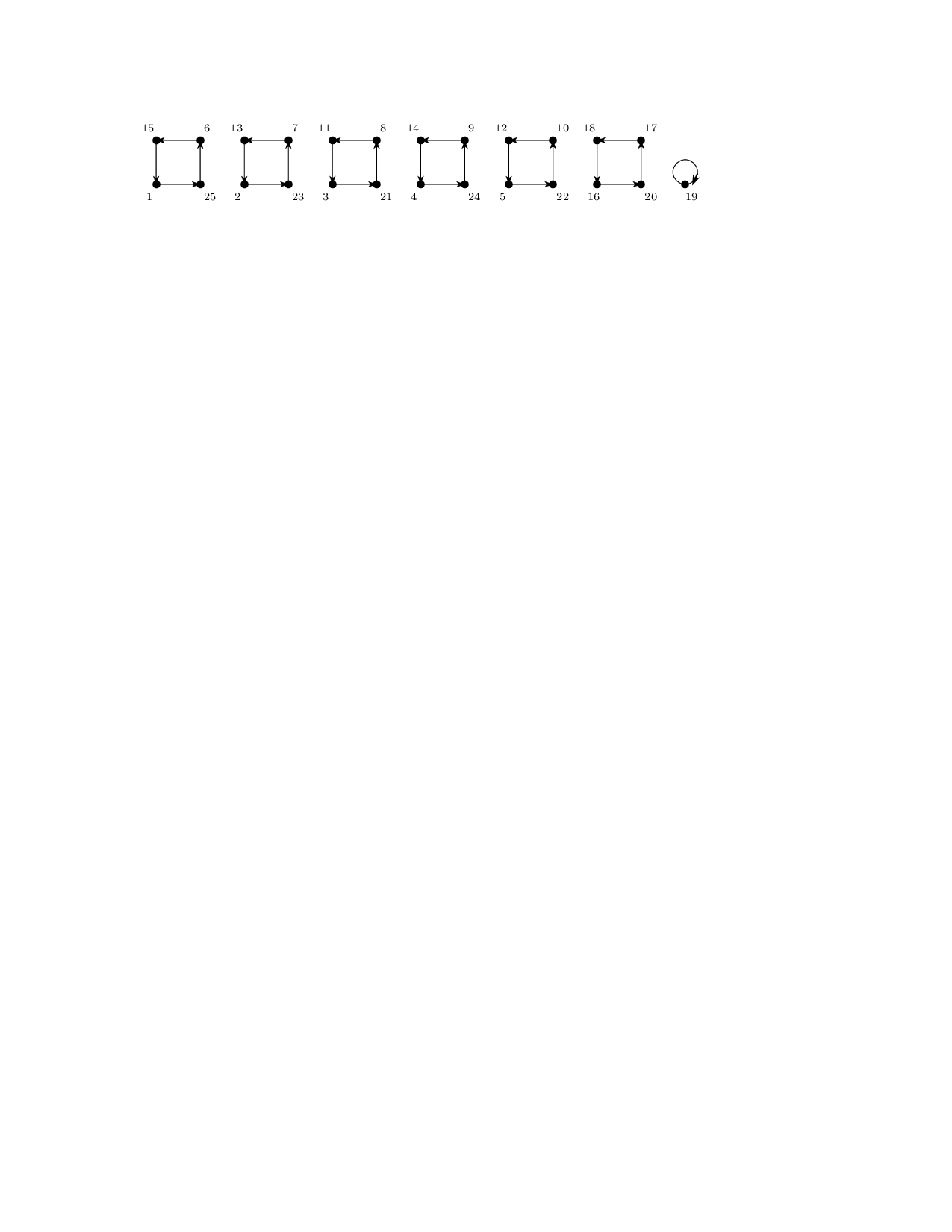}\\

 \caption{A  queen labeled digraph of order 25.}\label{Figure_1}
\end{center}
\end{figure}
\begin{figure}[ht]
\begin{center}
  \includegraphics[width=240pt]{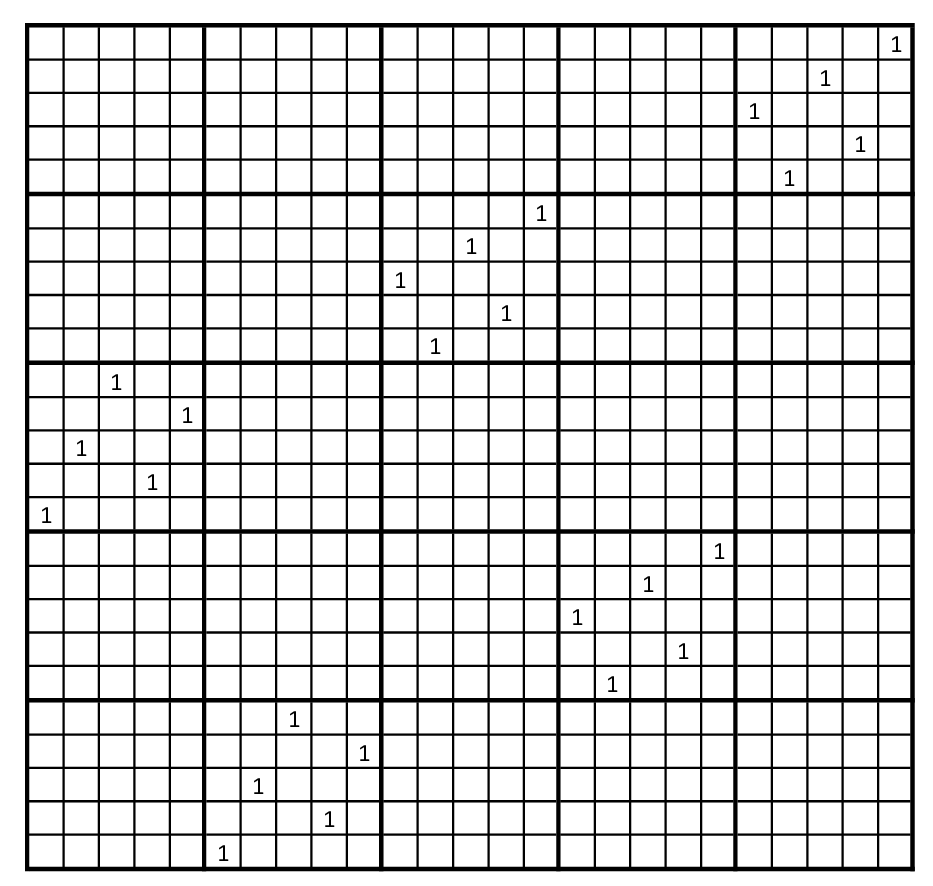}\\ 
 \caption{The solution of the $25$-queens problem obtained from Fig. \ref{Figure_1}.}\label{Figure_2}
\end{center}
\end{figure}
\end{example}

\begin{example}\label{Example_2}
Let $D$ be the $1$-regular queen labeled digraph defined by $V(D)=[1,4]$ and $E(D)=\{(1,3), (2,1), (3,4),(4,2)\}$. Let $\Gamma =\{F_1,F_2,F_3,F_4\}$, where $V(F_i)=[1,8]$, $E(F_1)=\{(1,5), (2,2), (3,4),\\ (4,7), (5,3),(6,8),(7,6), (8,1)\}$,  $E(F_2)=\{(1,1), (2,5), (3,8), (4,6), (5,3),(6,7),(7,2), (8,4)\}$ and the adjacency matrices of $F_3$ and $F_4$ are obtained from the adjacency matrix of $F_2$ by rotating it $\pi/2$ and $\pi$ radiants clockwise, respectively. Consider $h:E(D)\longrightarrow \Gamma$ defined by: $h(2,1)=F_1$, $h(1,3)=F_2$, $h(3,4)=F_3$  and $h(4,2)=F_4$. Then, the adjacency matrix of $D\otimes_h\Gamma$, where the vertices are identified with the labels induced by the product (that is, a vertex $(a,i)$ of the $\otimes_h$-product  is identified with $8(a-1)+i$), gives a solution of the $32$-queens problem (see Fig. \ref{Figure_3}).

Notice that, since $\gcd(4,6)=\gcd(8,6)\neq 1$, no one of the queen solutions related to the digraphs that are involved in the $\otimes_h$-product are modular solutions.
\begin{figure}[ht]

\begin{center}
 \includegraphics[width=307pt]{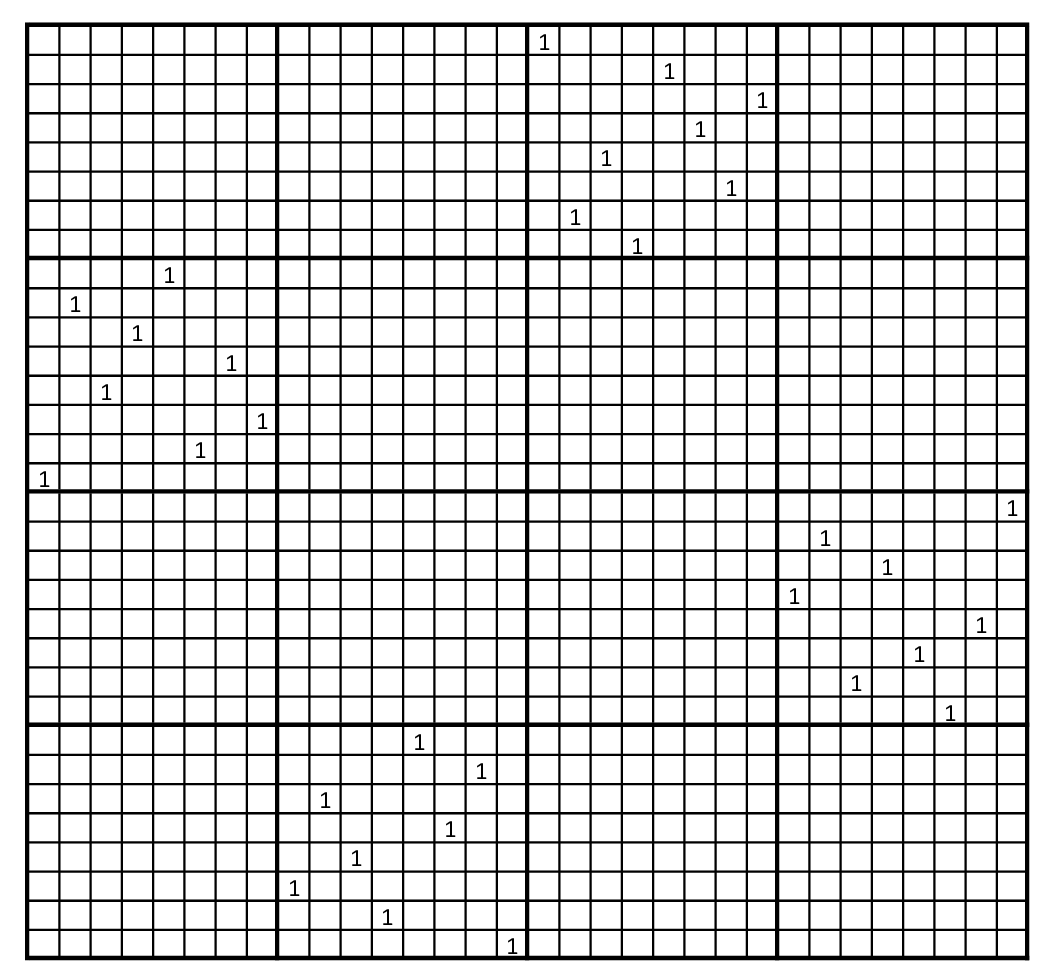}\\ 

 \caption{A solution of the $32$-queens problem obtained from the $\otimes_h$-product.}\label{Figure_3}
 \end{center}
\end{figure}
\end{example}
\section{Modular queen digraphs}
We start this section by applying the $\otimes_h$-product to modular queen labelings. The idea of this proof is related to the one of Theorem 2.1 in \cite{ILMR}.
\begin{theorem}\label{theo: generalization_queen_and_product}
Let $D$ be any modular queen digraph of order $m$. Let $\Gamma$ be a family of modular queen digraphs of order $n$ and let $h:E(D)\longrightarrow \Gamma$ be any function such that for any pair of arcs $(u,v)$ and $(u',v')$ in $E(D)$ the following conditions hold.
\begin{itemize}
\item[(i)]  If $s(u,v)=s(u',v')-1\pmod m$, then $(s(h(u,v))-n)\cap s(h(u',v'))=\emptyset$.
\item[(ii)]  If $d(u,v)=d(u',v')-1\pmod m$, then $(d(h(u,v))-n)\cap d(h(u',v'))=\emptyset$.
\end{itemize}
Then,
  $D\otimes_h\Gamma$ is a modular queen digraph.
 \end{theorem}
\begin{proof}
As in the previous section, we rename the vertices of $D$ and each element in $\Gamma$ after the labels of their corresponding modular queen labelings and, we consider the labeling $l:V(D\otimes_h\Gamma)\longrightarrow [1,mn]$ that assigns to the vertex $(a,i)$ the label $n(a-1)+i$. Let us check that $l$ is a modular queen labeling of $D\otimes_h\Gamma$.
That is, for any two pairs of different arcs $((a,i),(b,j)), ((a',i'),(b',j')) \in E(D\otimes _{h}\Gamma)$, (c1) $s((a,i),(b,j))\neq s((a',i'),(b',j')) \pmod {mn}$ and (c2) $d((a,i),(b,j))\neq d((a',i'),(b',j')) $ (mod $ {mn}$), where $s((a,i),(b,j))$ and $d((a,i),(b,j))$ are the induced sum and the induced difference introduced in (\ref{eq: sum arc product}) and (\ref{eq: difference arc product}), respectively. Since each $F\in\Gamma$ is labeled with a modular queen labeling, all the sums $i+j \pmod n$, for $(i,j)\in F$ are different. The same happens with all the sums $a+b\pmod m$, for $(a,b)\in E(D)$. Thus, (c1) clearly holds whenever $(a,b)=(a',b')$ or $i+j\neq i'+j' \pmod n$. Assume to the contrary that $(a,b)\neq (a',b')$, $i+j=i'+j' \pmod n$ and that $s((a,i),(b,j))= s((a',i'),(b',j')) \pmod {mn}$. By Lemma \ref{lemma: S(F) and d(F)}, either $i+j=i'+j'$ or $i+j-(i'+j')\in \{\pm n\}$.  If $i+j=i'+j'$ then equality $s((a,i),(b,j))= s((a',i'),(b',j')) \pmod {mn}$ implies that $a+b=a'+b'\pmod m$ a contradiction. Hence, $i+j-(i'+j')\in \{-n,n\}$. Without loss of generality, assume $i+j-(i'+j')=n$. Again by equality $s((a,i),(b,j))= s((a',i'),(b',j')) \pmod {mn}$, we conclude that $a+b=a'+b'-1 \pmod m$, which, together with $i+j-(i'+j')=n$, contradicts statement (i) of the theorem.
The proof of (c2) is similar. \qed
\end{proof}

Similarly to what happens with Corollary \ref{coro: queen_and_product_of_$1$-regular}, we can obtain an application of the above result to the modular $n$-queens problem.

\begin{corollary}\label{coro: $1$-regular_modular_queen_and_product}
Let $D$ be any modular queen $1$-regular digraph of order $m$. Let $\Gamma$ be a family of modular queen  $1$-regular digraphs of order $n$ and let $h:E(D)\longrightarrow \Gamma$ be any function such that for any pair of arcs $(u,v)$ and $(u',v')$ in $E(D)$ the following conditions hold.
\begin{itemize}
\item[(i)]  If $s(u,v)=s(u',v')-1\pmod m$, then $(s(h(u,v))-n)\cap s(h(u',v'))=\emptyset$.
\item[(ii)]  If $d(u,v)=d(u',v')-1\pmod m$, then $(d(h(u,v))-n)\cap d(h(u',v'))=\emptyset$.
\end{itemize}
Then,
  $D\otimes_h\Gamma$ is a modular queen  $1$-regular digraph.
 \end{corollary}

\begin{example}\label{Example_3}
It turns out that the labelings of the digraphs that appear in Example \ref{Example_1}, namely $D$, $F_1$ and $F_2$ are modular queens, however condition (ii) in Corollary  \ref{coro: $1$-regular_modular_queen_and_product} does not hold and the resulting labeled digraph obtained by means of the $\otimes_h$-product is not a modular queen labeling, for instance, the arcs $(25,6)$ and $(7,13)$ induce the same difference modulo 25. Therefore, the solution of Fig. \ref{Figure_2}, is not a solution for the modular $25$-queens problem. 
\end{example}

Notice that conditions (i) and (ii)  in Corollary  \ref{coro: $1$-regular_modular_queen_and_product} trivially hold when $h$ is a constant function, due to the modularity of every element in the family $\Gamma$. 
The next result is a direct consequence of this remark.

\begin{corollary}\label{coro: number of modular queens} For all integers $m$ and $n$, it holds that $$M(mn)\ge M(m)M(n).$$ \end{corollary}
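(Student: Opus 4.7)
The plan is to read Corollary \ref{coro: $1$-regular_modular_queen_and_product} as a counting device: any triple consisting of a modular queen $1$-regular digraph $D$ of order $m$, a family $\Gamma$ of modular queen $1$-regular digraphs of order $n$, and a function $h:E(D)\to\Gamma$ produces a modular queen $1$-regular digraph of order $mn$, which, via the bijection between modular queen labelings of $1$-regular digraphs and solutions of the modular $n$-queens problem recalled in the introduction, corresponds to a solution of the modular $mn$-queens problem.

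For the bound $M(mn)\ge M(m)M(n)^m$, I would take $\Gamma$ to be the set of all $M(n)$ modular queen $1$-regular (labeled) digraphs of order $n$; since all of them share the common vertex set $[1,n]$ once vertices are identified with their labels, $\Gamma$ is a legitimate family in the sense of the $\otimes_h$-product. I would then let $D$ range over the $M(m)$ modular queen $1$-regular digraphs of order $m$, and for each such $D$ let $h$ range over the $M(n)^{|E(D)|}=M(n)^m$ functions from $E(D)$ to $\Gamma$. Each resulting $D\otimes_h\Gamma$ is, by the corollary, a modular queen $1$-regular digraph of order $mn$, and thus encodes a solution of the modular $mn$-queens problem.

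The main obstacle, and the only nontrivial step, is to verify that distinct pairs $(D,h)$ yield distinct solutions. Under the induced labeling $(a,i)\mapsto n(a-1)+i$, the vertex set $[1,mn]$ is partitioned into the $m$ consecutive blocks $B_a=[n(a-1)+1,na]$, and by the definition of the $\otimes_h$-product there is an arc from a label in $B_a$ to a label in $B_b$ if and only if $(a,b)\in E(D)$; moreover, under the canonical bijection $B_c\leftrightarrow [1,n]$ the arcs inside the block pair $B_a\times B_b$ mirror exactly $E(h(a,b))$. Hence from the output labeled digraph one can recover $D$ from the block-to-block adjacency pattern and each value $h(a,b)$ from the within-block arcs, so the assignment $(D,h)\mapsto D\otimes_h\Gamma$ is injective, yielding $M(mn)\ge M(m)M(n)^m$. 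Interchanging the roles of $m$ and $n$ in the same construction (using an order-$n$ digraph as the outer factor and order-$m$ digraphs as the inner factors) gives the symmetric bound $M(mn)\ge M(n)M(m)^n$, and taking the maximum completes the proof.
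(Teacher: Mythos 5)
Your proof is correct and follows essentially the same route as the paper, which simply invokes Corollary \ref{coro: $1$-regular_modular_queen_and_product} and asserts that distinct choices of $h$ (and $D$) give distinct solutions. In fact, your block-decomposition argument for injectivity of $(D,h)\mapsto D\otimes_h\Gamma$ supplies the detail that the paper's one-line proof leaves implicit.
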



\section{Queen labelings of $1$-regular digraphs}
We know that there is a bijection between solutions of the (modular) $n$-queens problem and (modular) queen labelings of $1$-regular digraph of order $n$. In this section we will provide some families of $1$-regular digraphs that admit (modular) queen labelings.

Let $C^+_n$ be a strong orientation of the cycle of $n$ vertices. By checking by hand, we obtain the following information:
\begin{itemize}
\item $n=4$: $C^+_4$ is a queen digraph, but $C^+_3\cup C^+_1$ is not a queen digraph.
\item $n=5$: $C^+_5$ is not a queen digraph, but $C^+_4\cup C^+_1$ is a queen digraph.
\item $n=6$: $C^+_6$ and $2C^+_3$ are queen digraphs, but $C^+_5\cup C^+_1$ is not a queen digraph.
\item $n=7$: $C^+_6\cup C^+_1$, $2C^+_3\cup C^+_1$ and $C^+_7$ are queen digraphs, but $C^+_4\cup C^+_3$ is not a queen digraph.
\end{itemize}

One of the first families that we provide in this paper is obtained by using the modular solution provided by P\'olya in \cite{Polya}. Let $\mathbb{Z}_m$ be the integers modulo $m$.

\begin{lemma}\label{lemma: p prime}
Let $p$ be a prime integer, $p>3$. If $2$ is a primitive root of $p$, then $C^+_{p-1}\cup C^+_1$ is a (modular) queen digraph and $C^+_{p-1}$ is a queen digraph. 
\end{lemma}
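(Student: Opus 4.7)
The plan is to take Pólya's bijection $k\mapsto 2k\pmod p$ as the labeling and let the hypothesis on $2$ control the cycle structure. Since $p>3$ is prime we have $\gcd(p,6)=1$, so the map is well defined on $\mathbb{Z}_p$; identifying the residue $0$ with the label $p$, I view it as a labeling of a $1$-regular digraph on the vertex set $[1,p]$. The vertex $p$ carries a loop, giving $C^+_1$, while on $\mathbb{Z}_p^{\ast}=\{1,\dots,p-1\}$ multiplication by $2$ has a single orbit precisely because $2$ is assumed to be a primitive root of $p$; this produces a single cycle $C^+_{p-1}$, and the underlying digraph is exactly $C^+_{p-1}\cup C^+_1$.

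For the modular queen claim I would just verify the two conditions directly: the induced sum of an arc $(k,2k)$ is $3k\pmod p$ and the induced difference is $k\pmod p$; since $\gcd(3,p)=1$, both maps are permutations of $\mathbb{Z}_p$, so the sums and the differences along arcs are all distinct modulo $p$.

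For the queen (non-modular) claim on $C^+_{p-1}$, I would reuse the same labeling restricted to $\{1,\dots,p-1\}$ and check distinctness as integers rather than residues. The arcs split naturally into two classes according to whether $2i\le p-1$ (producing sum $3i$ and difference $i$) or $2i\ge p+1$ (producing sum $3i-p$ and difference $i-p$). Within each class distinctness is obvious; across classes the differences lie in $\{1,\dots,(p-1)/2\}$ and $\{-(p-1)/2,\dots,-1\}$ and are separated by sign. The sums in the first class are multiples of $3$ while those in the second lie in the coset $(p+3)/2+3\mathbb{Z}$, and the two can meet only if $3\mid(p+3)/2$, i.e., $p=3(2m-1)$, which is ruled out by $p>3$ prime. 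The extension to $C^+_{p-1}\cup C^+_1$ as a queen digraph is then immediate, since the loop at the vertex $p$ contributes only the sum $2p$ and the difference $0$, both clearly outside the ranges above. The main obstacle is precisely this disjointness check between the two cosets of sums; everything else is routine verification on top of Pólya's map.
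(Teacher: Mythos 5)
Your proof is correct and follows the same route as the paper: P\'olya's doubling map $k\mapsto 2k \pmod p$, with the primitive-root hypothesis forcing the cycle structure $C^+_{p-1}\cup C^+_1$; you merely make explicit the verification (arc sums $3k$, arc differences $k$, both permutations of $\mathbb{Z}_p$ since $\gcd(3,p)=1$) that the paper dismisses as ``an easy check''. Your third paragraph is not actually needed: since the arc sums and differences are already pairwise incongruent modulo $p$, they are automatically distinct as integers, so the queen (non-modular) property of $C^+_{p-1}$ follows at once from the modular property by deleting the loop vertex labeled $p$.
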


\begin{proof}
Let $D$ be the $1$-regular digraph with $V(D)=\mathbb{Z}_p$ defined by $(u,v)\in E(D)$ if and only if $v\equiv 2u \pmod p$. An easy check shows that the adjacency matrix of $D$ is a solution for the modular $p$-queens problem with a queen in position $(0,0).$
Since $2$ is a primitive root of $p$, the sequence, $1, 2,4, 8\pmod p,\ldots, 2^{p-1}\pmod p$ defines a labeling of a cycle of length $p-1$. Thus, $C^+_{p-1}\cup C^+_1$ is a (modular) queen digraph and $C^+_{p-1}$ is a queen digraph.\qed
\end{proof}

Park et al. characterized in \cite{ParParKim} some primes for which $2$ is a primitive root. 

\begin{theorem}\cite{ParParKim}
Let $p$ and $q$ be primes.
\begin{itemize}
\item Let $p=2q+1$. Then $2$ is a primitive root modulo $p$ if and only if $q\equiv 1\pmod 4$.
\item Let $p=4q+1$. Then  $2$ is a primitive root modulo $p$, for all $p$.
\end{itemize}
\end{theorem}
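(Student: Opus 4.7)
The plan is to analyze the multiplicative order $\mathrm{ord}_p(2)$, which must divide $\phi(p) = p-1$, and then to use Euler's criterion together with the second supplement to quadratic reciprocity (namely, $\left(\frac{2}{p}\right) = 1$ iff $p \equiv \pm 1 \pmod 8$). I will tacitly assume $q$ is an odd prime throughout; the case $q = 2$ forces $p \in \{5, 9\}$ in the two statements and so is either non-prime or handled by direct inspection.

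For the first part, with $p = 2q+1$, the divisors of $p-1 = 2q$ are $\{1, 2, q, 2q\}$. Orders $1$ and $2$ are excluded when $p > 3$, so the order of $2$ is $q$ or $2q$; hence $2$ is a primitive root modulo $p$ iff $2^{q} \not\equiv 1 \pmod p$. By Euler's criterion, $2^{q} = 2^{(p-1)/2} \equiv \left(\frac{2}{p}\right) \pmod p$, so primitivity is equivalent to $2$ being a quadratic non-residue modulo $p$. A short congruence computation then gives the equivalence: if $q \equiv 1 \pmod 4$ then $p = 2q+1 \equiv 3 \pmod 8$, so $2$ is a non-residue; if $q \equiv 3 \pmod 4$ then $p \equiv 7 \pmod 8$, so $2$ is a residue. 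This proves the ``iff''.

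For the second part, with $p = 4q+1$, the order of $2$ lies in $\{1, 2, 4, q, 2q, 4q\}$, and I want to force it to be $4q$. The options $1, 2, 4$ would imply $p \mid 2^{4}-1 = 15$, which is impossible since $p = 4q+1$ with $q \geq 3$ prime forces $p \geq 13$. The remaining options $q$ and $2q$ both imply $2^{2q} \equiv 1 \pmod p$, but $2^{2q} = 2^{(p-1)/2} \equiv \left(\frac{2}{p}\right) \pmod p$, and since $q$ is odd we have $p = 4q+1 \equiv 5 \pmod 8$, so $\left(\frac{2}{p}\right) = -1$. Thus $2^{2q} \equiv -1 \pmod p$, ruling out orders $q$ and $2q$, so the order must be $4q$ and $2$ is a primitive root.

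There is no serious obstacle in this argument; the whole proof is a bookkeeping exercise in congruences modulo $8$, combined with the standard facts that the order of $2$ divides $p-1$ and that $2^{(p-1)/2} \equiv \left(\frac{2}{p}\right) \pmod p$. The only delicate point is verifying the small cases where the divisor-elimination step almost fails (essentially just $p = 5, 7, 13$), which are handled by direct computation or by the requirement that $q$ be an odd prime.
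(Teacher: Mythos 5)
Your argument is correct, and there is nothing in the paper to compare it against: the statement is quoted from Park, Park and Kim \cite{ParParKim} and the paper supplies no proof of it. Your route is the standard one and it works: reduce to computing $\mathrm{ord}_p(2)$ among the divisors of $p-1$, eliminate the small divisors by noting they would force $p \mid 2^2-1$ or $p \mid 2^4-1$, and decide the remaining ambiguity via Euler's criterion $2^{(p-1)/2} \equiv \left(\tfrac{2}{p}\right) \pmod p$ together with the second supplementary law. The congruence bookkeeping checks out: $q\equiv 1 \pmod 4$ gives $p=2q+1\equiv 3\pmod 8$ (non-residue, hence order $2q$), $q\equiv 3\pmod 4$ gives $p\equiv 7\pmod 8$ (residue, hence order $q$), and for $p=4q+1$ with $q$ odd one gets $p\equiv 5\pmod 8$, so $2^{2q}\equiv -1$ and the order is forced to be $4q$. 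One small point: your parenthetical that the case $q=2$ is ``handled by direct inspection'' is slightly off for the first item, since $q=2$ gives $p=5$, where $2$ \emph{is} a primitive root even though $2\not\equiv 1\pmod 4$; so the quoted equivalence genuinely requires $q$ odd, which is the hypothesis you (rightly) impose. That is a defect of the theorem as transcribed in the paper, not of your proof.
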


Let $p$ be a Mersenne prime, that is, a prime of the form $p=2^n-1$ for some integer $n$. It is known that the order of $2$ modulo $p$ is $n$. Thus, using a similar proof that the one of Lemma \ref{lemma: p prime}, we obtain the next result.

\begin{lemma}\label{lemma: p primeMersenne}
Let $p=2^n-1$ be a prime integer, $p>3$. Then $(p-1)/nC^+_{n}\cup C^+_1$ is a (modular) queen digraph and $(p-1)/nC^+_{n}$ is a queen digraph. 
\end{lemma}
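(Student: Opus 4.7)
The plan is to adapt the proof of Lemma~\ref{lemma: p prime} verbatim, replacing the hypothesis that $2$ is a primitive root modulo $p$ with the weaker fact that the multiplicative order of $2$ modulo $p=2^n-1$ is exactly $n$. Concretely, I would define the $1$-regular digraph $D$ on $V(D)=\mathbb{Z}_p$ by $(u,v)\in E(D)$ iff $v\equiv 2u\pmod p$. The doubling map fixes $0$ (yielding the loop $C_1^+$) and, on $\mathbb{Z}_p^*$, permutes the nonzero residues into orbits of common length equal to the order of $2$ in $(\mathbb{Z}/p\mathbb{Z})^*$. Since $2^n\equiv 1\pmod p$ while $2^k-1<p$ for every $0<k<n$, that order is exactly $n$, so $D$ decomposes as $(p-1)/n\cdot C_n^+\cup C_1^+$.

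For the modular queen property, every arc $(u,v)\in E(D)$ satisfies $s(u,v)\equiv 3u\pmod p$ and $d(u,v)\equiv u\pmod p$. Because $p>3$ is prime, $\gcd(3,p)=1$, so distinct values of $u\in\mathbb{Z}_p$ produce pairwise distinct induced sums and differences modulo $p$. This gives a modular queen labeling of $(p-1)/n\cdot C_n^+\cup C_1^+$.

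For the non-modular claim, I would delete the loop and carry the labels $\{1,\ldots,p-1\}$. The induced differences $v-u$ equal $u$ when $2u<p$ and $u-p$ when $2u\ge p$, so they occupy $\{-(p-1)/2,\ldots,-1\}\cup\{1,\ldots,(p-1)/2\}$ and are pairwise distinct integers. The induced sums are $3u$ or $3u-p$; any coincidence between two such values forces an equation of the form $p=3(u_2-u_1)$, impossible because $\gcd(3,p)=1$. Hence $(p-1)/n\cdot C_n^+$ is a queen digraph. The only non-cosmetic step is the computation of the order of $2$ modulo a Mersenne prime, which is a standard one-line fact; every remaining ingredient mirrors the proof of Lemma~\ref{lemma: p prime}.
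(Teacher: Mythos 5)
Your proposal is correct and follows essentially the same route as the paper, which simply invokes the fact that the order of $2$ modulo a Mersenne prime $2^n-1$ is $n$ and then repeats the argument of Lemma~\ref{lemma: p prime} (doubling map on $\mathbb{Z}_p$, loop at $0$, orbits of common length $n$ on $\mathbb{Z}_p^*$). Your explicit verification that the induced sums $3u$ and differences $u$ are distinct, both modulo $p$ and over the integers after deleting the loop, is exactly the ``easy check'' the paper leaves to the reader.
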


The {\it Jacobsthal sequence} (or  {\it Jacobsthal numbers}) is an integer sequence which appears in \cite{Sloanweb} as `A001045'. It has connections with multiple applications, some of them can be found in \cite{Sloanweb}. Let $D_n$ be the $1$-regular digraph on $[1,n]$ defined by $(u,v)\in E(D_n)$ if and only if $v\equiv -2u+2 \pmod n$. The structure of $D_n$ was characterized in \cite{MJM_LMR} using the  previous sequence.

\begin{lemma}\cite{MJM_LMR}\label{lemma: structure D_n}
Let $n$ be an odd integer, $(a_i)$ the Jacobsthal sequence and $\Theta_k\subset \{1,2,\ldots, n\}$ such that $x\in \Theta_k$ if $k$ is the minimum $i$ with $
3a_ix\equiv2a_i\ \pmod n$. Then,
$$D_n\simeq \bigcup_{k=1}^n\frac{|\Theta_k|}k C^+_k.$$  
\end{lemma}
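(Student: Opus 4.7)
The plan is to iterate the map $f(x)=-2x+2\pmod n$ (whose functional digraph is exactly $D_n$) and use the Jacobsthal recurrence to obtain a closed form for $f^i$, after which the congruence defining $\Theta_k$ turns out to be exactly the statement that the cycle of $D_n$ through $x$ has length $k$.

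First, since $n$ is odd, $-2$ is a unit modulo $n$, so $f$ is a bijection on $\{1,2,\ldots,n\}$. Consequently $D_n$ is a disjoint union of directed cycles, and each vertex $x$ lies in a unique cycle whose length equals the least positive $i$ with $f^i(x)\equiv x\pmod n$.

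Next, by a straightforward induction on $i$, write $f^i(x)\equiv(-2)^ix+c_i\pmod n$, where $c_0=0$ and $c_{i+1}=-2c_i+2$. Comparing this recurrence with the Jacobsthal recursion $a_{i+1}=a_i+2a_{i-1}$, $a_0=0$, $a_1=1$, one verifies that $c_i=2(-1)^{i+1}a_i$. Using the well-known closed form $3a_i=2^i-(-1)^i$, the iterate becomes
\[
f^i(x)\equiv(-1)^i\bigl(2^ix-2a_i\bigr)\pmod n.
\]

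The cycle length of $x$ is therefore the least positive integer $i$ such that $(-1)^i(2^ix-2a_i)\equiv x\pmod n$, which rearranges to
\[
(2^i-(-1)^i)x\equiv 2a_i\pmod n,\qquad\text{i.e.,}\qquad 3a_ix\equiv 2a_i\pmod n.
\]
This is exactly the condition defining $\Theta_k$, so $x\in\Theta_k$ if and only if the cycle of $D_n$ through $x$ has length $k$. Since each cycle of length $k$ contributes $k$ distinct vertices to $\Theta_k$, the set $\Theta_k$ is partitioned into $|\Theta_k|/k$ such cycles, yielding the desired isomorphism
\[
D_n\simeq\bigcup_{k=1}^{n}\frac{|\Theta_k|}{k}\,C^+_k.
\]

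The only mildly delicate point is the identification $c_i=2(-1)^{i+1}a_i$ (a routine but easily mis-signed induction matching the recurrence $c_{i+1}=-2c_i+2$ against the Jacobsthal recurrence); everything else is mechanical once the explicit form of $f^i$ is in hand.
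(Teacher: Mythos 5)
Your proof is correct. Note that the paper itself gives no proof of this lemma---it is quoted from the cited reference---so there is nothing internal to compare against; your argument (iterating the affine bijection $x\mapsto -2x+2$, matching the inhomogeneous term against the Jacobsthal recurrence to get $f^i(x)\equiv(-1)^i(2^ix-2a_i)$, and using $3a_i=2^i-(-1)^i$ to turn the period condition into the congruence defining $\Theta_k$) is the natural and complete one. The only point worth making explicit is that ``the minimum $i$'' in the statement must be read as the minimum \emph{positive} $i$, since $a_0=0$ makes the congruence vacuously true at $i=0$; your phrasing ``least positive integer $i$'' already handles this.
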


Using the solutions of the $n$-queens problem provided by Pauls, for $n\equiv 1,5 \pmod n$, in \cite{Pau1,Pau2}, and as a corollary of the above result we obtain the next lemma.
 
\begin{lemma} Let $n$ be an odd integer such that $n\equiv 1,5$ $\pmod 6$, $(a_i)$ the Jacobsthal sequence and $\Theta_k\subset \{1,2,\ldots, n\}$ such that $x\in \Theta_k$ if $k$ is the minimum $i$ with $
3a_ix\equiv2a_i\ \pmod n$. Then,
$$\bigcup_{k=1}^n\frac{|\Theta_k|}k C^+_k$$ is a queen digraph. 
\end{lemma}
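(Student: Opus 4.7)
The plan is to get this lemma as an immediate corollary of Lemma \ref{lemma: structure D_n}, once we verify that $D_n$ itself carries a queen labeling whenever $n\equiv 1,5\pmod 6$. The key observation is that Pauls' classical construction in \cite{Pau1,Pau2} for $n$ coprime to $6$ admits the closed form ``place a queen in row $u$ at column $v$, where $v\equiv -2u+2\pmod n$ (with $v$ chosen in $[1,n]$)''. But this placement is exactly the arc set of the $1$-regular digraph $D_n$ used in Lemma \ref{lemma: structure D_n}, so Pauls' theorem is already saying that $D_n$ is a queen digraph.

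To make this link precise I would first check the modular condition, which is the easy half. Under the arc rule $v\equiv -2u+2\pmod n$, the induced sums satisfy $u+v\equiv -u+2\pmod n$ and the induced differences satisfy $v-u\equiv -3u+2\pmod n$; both maps $u\mapsto -u+2$ and $u\mapsto -3u+2$ on $\mathbb{Z}_n$ are bijective because $n$ is odd and $\gcd(n,3)=1$ (this is where $n\equiv 1,5\pmod 6$ enters). The content of Pauls' solution, for these congruence classes, is exactly the strengthening of these modular non-coincidences to honest integer non-coincidences of $u+v$ and $v-u$, which is what a queen labeling (as opposed to a modular one) demands.

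Once $D_n$ has been identified with a queen digraph in this way, I would invoke Lemma \ref{lemma: structure D_n} to rewrite
$$D_n \;\simeq\; \bigcup_{k=1}^{n}\frac{|\Theta_k|}{k}\, C^+_k.$$
A queen labeling is preserved under any isomorphism of labeled digraphs (the vertex set of either side is $[1,n]$ and the isomorphism simply permutes labels), so pulling the queen labeling of $D_n$ across the isomorphism gives the required queen labeling of the cycle union, completing the proof.

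The main obstacle is not the cycle-decomposition step, which is already done in Lemma \ref{lemma: structure D_n}, but the explicit identification of Pauls' solution with the affine rule $v\equiv -2u+2\pmod n$: one must extract from \cite{Pau1,Pau2} that Pauls' placement (for residues $1,5\pmod 6$) is of this precise affine form, rather than another affine variant that also satisfies the modular conditions but may or may not give a true, non-wrap-around solution. Once this citation is settled, the chain ``Pauls' placement $=$ $D_n$'' together with Lemma \ref{lemma: structure D_n} finishes the argument without additional computation.
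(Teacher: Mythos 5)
Your proposal is correct and follows the same overall route as the paper: show that $D_n$ itself is a queen digraph, then transport the labeling across the isomorphism of Lemma \ref{lemma: structure D_n}. The difference lies in how the key sub-step is justified. The paper identifies the adjacency matrix of $D_n$ with a $\pi/2$ rotation of Pauls' $(n-1)\times(n-1)$ solution augmented by a queen at $(0,0)$, and inherits the queen property from that identification. You instead compute directly that under the arc rule $v\equiv-2u+2\pmod n$ the induced sums are $\equiv -u+2$ and the induced differences are $\equiv -3u+2$, both bijections of $\mathbb{Z}_n$ when $\gcd(n,6)=1$; this shows $D_n$ is a \emph{modular} queen digraph by pure computation. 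What you seem not to have noticed is that this already finishes the job: if two sums (or differences) are distinct modulo $n$, they are distinct as integers, so a modular queen labeling is automatically a queen labeling. The implication goes in the opposite direction from the one you describe when you call the passage from modular to integer non-coincidences a ``strengthening'' that must be extracted from Pauls; no such strengthening is needed. Consequently the ``main obstacle'' you identify --- verifying that Pauls' placement is literally the affine rule $v\equiv-2u+2\pmod n$ rather than some other variant --- is not an obstacle at all: your two-line modular computation is self-contained and the citation of \cite{Pau1,Pau2} can be dropped entirely. In this sense your argument, once the redundant appeal to Pauls is stripped out, is cleaner and more elementary than the paper's, which leans on a geometric identification (rotation plus an added queen) that is asserted rather than verified.
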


\begin{proof} The adjacency matrix of $D_n$ is a $\pi/2$ radiants clockwise rotation of the solution of the $(n-1)\times (n-1)$-queens problem provided by Pauls, for $n\equiv 1,5 \pmod n$, in \cite{Pau1,Pau2}, when an extra queen is added in position (0,0). Thus,$D_n$ defines a queen labeled digraph. By Lemma \ref{lemma: structure D_n}, the result follows. \qed
\end{proof}

Alhough $C_3^+$ is not a queen graph, for every positive integer $m\ge 3$, $m\equiv 0,1 \pmod 3$, the union of $m(m-1)/3$ strong oriented cycles of length $3$ is a queen digraph. See the next proposition.
 
\begin{proposition}\cite{BloLamMunRiu}\label{propo: union-3-cycles}
Placing $m(m-1)$ queens in positions $(i,f(i))$, $i=1,2,\ldots, m(m-1)$ is a solution to the problem of queen labeling a union of $m(m-1)/3$ copies of $C_3^+$, where $f(i)=(m-1)[(i-1) \pmod{m}]+(m-1)-\lfloor (i-1)/m\rfloor$.
\end{proposition}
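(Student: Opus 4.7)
The plan is to work in coordinates $i = qm + r + 1$ with $r \in \{0,1,\ldots,m-1\}$ and $q \in \{0,1,\ldots,m-2\}$, which makes the formula clean:
\[
f(i) = (m-1)(r+1) - q.
\]
I split the proof into two parts: (I) the placement $(i,f(i))$ is a valid non-attacking queens configuration on the $n\times n$ board with $n=m(m-1)$, and (II) the induced $1$-regular digraph $i\mapsto f(i)$ is a disjoint union of strongly oriented $3$-cycles.

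For (I), I first verify that $f$ is a bijection: with $r$ fixed, the $m-1$ images $f(i)$ form the interval $\{(m-1)r+1,\ldots,(m-1)(r+1)\}$, and distinct values of $r$ give disjoint intervals partitioning $[1,m(m-1)]$. The diagonal sum becomes $i+f(i) = q(m-1)+m(r+1)$; equality for two pairs $(q,r),(q',r')$ forces $(q-q')(m-1)=m(r'-r)$, and $\gcd(m-1,m)=1$ together with $|q-q'|\le m-2<m$ yields $(q,r)=(q',r')$. The diagonal difference is $f(i)-i = (m-2)(r+1)-q(m+1)$; equality forces $(m-2)(r-r')=(q-q')(m+1)$, and here the hypothesis $m\equiv 0,1\pmod 3$ is essential because $\gcd(m-2,m+1)=\gcd(m-2,3)=1$ only in these residue classes, so $|r-r'|<m+1$ forces equality.

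For (II), I compute $f^2$ and $f^3$ via a case split on whether $r+q\le m-2$ or $r+q\ge m-1$ (this is what determines the quotient/remainder when rewriting $f(i)-1$ base~$m$). A direct calculation gives, in the first case, $f^2(r,q)=(q,\,m-2-r-q)$, and applying $f$ once more falls back into the first case (since the new coordinates sum to $m-2-q\le m-2$) and returns $(r,q)$; in the second case, $f^2(r,q)=(q+1,\,2m-3-r-q)$, the new sum equals $2m-2-r\ge m-1$ so the second case reapplies, again returning $(r,q)$. Hence $f^3=\mathrm{id}$. Finally, a fixed point of $f$ in the first case forces $3r=m-2$, and in the second case forces $3r=2m-1$; both require $m\equiv 2\pmod 3$, which is excluded. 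Since $f^3=\mathrm{id}$ and $f$ is fixed-point-free, any $2$-cycle would give $f(i)=f^3(i)=i$, a contradiction, so every orbit has length exactly $3$, yielding $m(m-1)/3$ copies of $C_3^+$.

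The main obstacle is the case analysis in part (II): the formula for $f$ involves an implicit mod-$m$ reduction (through the quotient and remainder of $i-1$), so iterating $f$ requires carefully tracking which of the two ``carry'' regimes one lands in at each step. The clean outcome -- that each regime is closed under one step of $f^2$ and that both return to $(r,q)$ after three iterations -- is what makes the result work, and the interaction of this case split with the residue condition $m\not\equiv 2\pmod 3$ is what simultaneously rules out fixed points and guarantees the diagonal-difference injectivity used in part (I).
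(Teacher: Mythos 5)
Your proof is correct and complete. Note that the paper itself offers no proof of this proposition --- it is imported verbatim from the reference [Bloom, Lampis, Muntaner-Batle, Rius-Font], so there is no in-paper argument to compare against; your write-up actually supplies the verification the paper omits. The two halves are exactly what is needed: part (I) shows the identity labeling satisfies the queen conditions (distinct column values, distinct sums $i+f(i)$, distinct differences $f(i)-i$), and part (II) shows the functional digraph of $f$ decomposes into $3$-cycles. Your coordinate change $i=qm+r+1$, giving $f=(m-1)(r+1)-q$, makes both halves routine, and I verified your case formulas $f(r,q)=(m-2-r-q,\,r)$ for $r+q\le m-2$ and $f(r,q)=(2m-2-r-q,\,r-1)$ for $r+q\ge m-1$, the closure of each regime under $f$, the identity $f^3=\mathrm{id}$, and the fixed-point analysis forcing $3\mid m-2$ or $3\mid 2m-1$. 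You also correctly isolate where the hypothesis $m\equiv 0,1\pmod 3$ (stated only in the paragraph preceding the proposition, not in the proposition itself) enters: both in $\gcd(m-2,m+1)=1$ for the anti-diagonal injectivity and in excluding fixed points. One cosmetic slip: after $f^2(r,q)=(q,\,m-2-r-q)$ in the first regime, the coordinate sum is $m-2-r$ rather than $m-2-q$; since both are at most $m-2$ the argument is unaffected.
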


A known result in the area of graph products (see for instance, \cite{HamImrKlav11}) is that the direct product of two strongly oriented cycles produces copies of a strongly oriented cycle, namely,
\begin{equation}\label{producte_directe_strong_cicles}
    C_m^+ \otimes C_n^+\cong \gcd (m,n)C_{\hbox{lcm}(m,n)}^+.
\end{equation}

By combining Proposition \ref{propo: union-3-cycles}, Corollary \ref{coro: queen_and_product_of_$1$-regular} and (\ref{producte_directe_strong_cicles}) we obtain the next lemma.

\begin{lemma}
Let $m$ be an integer with $m\ge 3$, $m\equiv 0,1 \pmod 3$ and $p$ a prime $p>4$ such that $2$ is a primitive root of $p$. Then,
$m(m-1)/3$ copies of $\gcd((p-1,3)C_{{\rm{lcm}}(p-1,3)}^+\cup C_3^+$ is a queen digraph.
\end{lemma}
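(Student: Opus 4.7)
The plan is to apply Theorem \ref{theo: queen_and_product_of_$1$-regular} to two ingredients already established earlier in the paper. First I would take $D$ to be the disjoint union of $m(m-1)/3$ copies of $C_3^+$ equipped with the queen labeling of Proposition \ref{propo: union-3-cycles}, and let $F=C_{p-1}^+\cup C_1^+$ be the modular queen digraph of order $p$ provided by Lemma \ref{lemma: p prime}. Choosing $\Gamma=\{F\}$ and the constant function $h\colon E(D)\longrightarrow \Gamma$ with $h\equiv F$, the product $D\otimes_h\Gamma$ reduces to the ordinary Kronecker product $D\otimes F$.

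Next I would verify the hypotheses of Theorem \ref{theo: queen_and_product_of_$1$-regular}. With $h$ constant, conditions (i) and (ii) collapse to the two requirements
\[
(s(F)-p)\cap s(F)=\emptyset\quad\text{and}\quad (d(F)-p)\cap d(F)=\emptyset.
\]
This is exactly where the modular hypothesis on $F$ is used: since $F$ is a $1$-regular modular queen digraph of order $p$, the $p$ values $s(u,v)\pmod p$ for $(u,v)\in E(F)$ are pairwise distinct and therefore form a complete residue system modulo $p$. Consequently no two elements of $s(F)\subset[2,2p]$ can differ by exactly $p$, and the same argument applied to $d(F)\subset[-(p-1),p-1]$ gives the second equality. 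Theorem \ref{theo: queen_and_product_of_$1$-regular} then guarantees that $D\otimes F$ is a $1$-regular queen digraph.

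Finally I would identify the structure of $D\otimes F$. The Kronecker product distributes over disjoint union of digraphs, and $C_3^+\otimes C_1^+\cong C_3^+$ (a single loop acts as an identity factor), so together with (\ref{producte_directe_strong_cicles}) this yields
\[
C_3^+\otimes F\cong \gcd(p-1,3)\,C_{\mathrm{lcm}(p-1,3)}^+\cup C_3^+.
\]
Since $D$ consists of $m(m-1)/3$ disjoint copies of $C_3^+$, the product $D\otimes F$ is the corresponding disjoint union of $m(m-1)/3$ copies of the digraph on the right, which is the desired conclusion. The only mildly delicate step is the translation of the modular queen property of $F$ into the shift-by-$p$ conditions of Theorem \ref{theo: queen_and_product_of_$1$-regular}; everything else is direct assembly of the previous results of this section.
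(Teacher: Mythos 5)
Your proposal is correct and follows essentially the same route as the paper: apply Theorem \ref{theo: queen_and_product_of_$1$-regular} with $D=(m(m-1)/3)C_3^+$ from Proposition \ref{propo: union-3-cycles} and $\Gamma=\{C_{p-1}^+\cup C_1^+\}$ from Lemma \ref{lemma: p prime}, then identify the Kronecker product via (\ref{producte_directe_strong_cicles}). In fact you spell out the one step the paper leaves implicit, namely that the modular queen property of $C_{p-1}^+\cup C_1^+$ forces $(s(F)-p)\cap s(F)=(d(F)-p)\cap d(F)=\emptyset$, which is exactly what conditions (i) and (ii) require for a constant $h$.
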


\begin{proof}
Recall that, by Proposition \ref{propo: union-3-cycles}, $(m(m-1)/3)C_3^+$ is a queen digraph and by Lemma \ref{lemma: p prime}, $C^+_{p-1}\cup C^+_1$ is a modular queen digraph. Thus, in particular, the induced sums and differences of $C^+_{p-1}\cup C^+_1$ satisfy the statement of Corollary \ref{coro: queen_and_product_of_$1$-regular}. Hence, by Corollary \ref{coro: queen_and_product_of_$1$-regular}, the direct product $(m(m-1)/3)C_3^+\otimes (C^+_{p-1}\cup C^+_1) $ is a $1$-regular queen digraph. Note that, for any pair of digraphs $D_1$ and $D_2$, $D_1\otimes D_2\simeq D_2\otimes D_1$. Therefore,
$$(m(m-1)/3)C_3^+\otimes (C^+_{p-1}\cup C^+_1) \simeq (m(m-1)/3)\gcd(p-1,3)C_{\hbox{lcm}(p-1,3)}^+\cup C_3^+,$$ and the result follows. \qed
\end{proof}


\end{document}